\newtheorem{teo}{Theorem}[section]
\newtheorem{cor}{Corollary}[section]
\newtheorem*{ex}{Example}
\newcommand{\rea}{\mathbb{R}}
\newcommand{\com}{\mathbb{C}}
\begin{document}
\title{Systematic Discovery of Runge-Kutta Methods through Algebraic Varieties}



\author{
Ivan Martino 
\footnote{Department of Mathematics -- Northeastern University, USA,  \emph{i.martino@northeastern.edu} }  
\and Giuseppe  Nicosia
\footnote{Department of Mathematics \& Computer Science -- University of Catania, Italy, \emph{nicosia@dmi.unict.it}}
 }

\maketitle

\begin{abstract}
This work  presents a new evolutionary optimization algorithm in theoretical mathematics with important applications in scientific computing.  
The use of the evolutionary algorithm is justified by the difficulty of the study of the parameterization of an algebraic variety, an important problem in algebraic geometry. 
We illustrate an application, {\sc Evo-Runge-Kutta}, in a problem of numerical analysis. 
Results show the design and the optimization of particular algebraic variety, the explicit $s$ levels Runge-Kutta methods of order $q$.
The mapping between algebraic geometry and evolutionary optimization is direct, and we expect that many open problems will be modelled in the same way.
\end{abstract}


\section{Introduction}
In science and engineering, problems involving the solution of a polynomial system appear everywhere. 
In this work we study the optimization of a positive real values function $\mathcal{G}$ defined over $X\subset \rea^N$, the set of real solutions of a system of $m$ polynomial equations $f_i(\mathbf{x})=0$ for $i=1,\dots, m$.
%
 
The key problem of optimizing $\mathcal{G}$ over $X$ is the necessity to control $X$ via its parameterizations: for reasonably large values of $N$ and $m$, it is an open problem to find the connected components $\{U_i\}$ of $X$, their local dimensions $d_i$ and their local parameterizations $\{\theta_i: \rea^{d_i}\rightarrow U_i\}$. 
Evidences do not suggest the use of symbolic computation or numerical method \cite{application_genetic_to_polynomial}.
For this reason, we state a particular methodology based on Evolutionary Algorithms.

The Evolutionary Algorithms (EAs) are a generic population-based meta-heuristic optimization algorithm bio inspired \cite{goldberg:02}. 
They \emph{evolve} a population of candidate solutions for the problem by \emph{reproducing} and \emph{selecting} with respect to a fitness function. 
They recently improve on several field because it is not ne\-ces\-sary any assumption about the solutions space of the problem. In particular the evolutionary algorithms fit very well into the optimization problem where the fitness function is cheap.
In this research field, the EAs were used to solve a particular class of polynomial system in \cite{application_genetics_to_system} and an hybrid algorithm was presented with the same aim in \cite{evo_alg_2}. In addition there were genetic approaches in pure algebra in \cite{application_genetic_to_polynomial} and \cite{finite-algebra}.

In applications it is not usual have a proper justification for the use of such optimization algorithms, apart cases where the dimension of the problem is prohibitive for any numerical computation.
We show that algebraic geometry provides a justification for why it is important to use evolutionary optimization algorithm to optimize any positive real function over the algebraic variety $X$, but this justification holds for any kind of optimization over $X$, \cite{two-stage} \cite{coello:07}, \cite{deb:01}.

In Section \ref{sec-alg-var} we introduce some notions about the algebraic varieties for a non-expert reader. 
In Section \ref{sec-optimization-problem}, we set up all the theoretical objects used in the most general environment and after that we present the abstract algorithm {\sc Evo-Alg-Variety}. 

We apply our suggested methodology in a numerical analysis interesting problem: in \cite{esempio1}, \cite{esempio2}, one wants to perform some particular features of the RK methods. With {\sc Evo-Runge-Kutta}, a new evolutionary optimization algorithm designing Runge Kutta methods, we find new Runge-Kutta methods of order $q$ with minimal approximation error.

At first, we introduce the Runge-Kutta methods in Section \ref{sec:RK-def}, sketching some important results in \cite{Hai}. In Section \ref{algebraic-variety}, we apply them to define the problem from a geometrical point of view. We show that
\[
	V_{q}^s=\left\lbrace \mbox{$s$-level Runge-Kutta methods of order accuracy $q$}\right\rbrace
\]
is an algebraic variety and one cannot compute its dimension and its local parameterizations. Section \ref{results} shows the obtained results.

The final section discusses the conclusions of the work underlining the possible directions of future optimization algorithms and the implications for algebraic varieties and pure algebra.

\section{The Algebraic Varieties}\label{sec-alg-var}

In this section we sketch some notions about the algebraic varieties for the reader without this background; we suggesting to an expert in algebraic geometry to skip this section. 

Claiming a zero set of a polynomial system being an algebraic varieties has some underlying effects. 
One of the main differences concerns the topology. 
When we write $\rea^k$, we normally mean the set of $k$-tuples of real numbers with the standard (or Euclidean) topology, i.e. the topology in which we consider the open sets as the the $k$-dimensional open ball, $B(x_0,\,\epsilon)=\{x\in \rea^k:\, |x-x_0|<\epsilon\}$, of center $x_0\in \rea^k$ and radius $\epsilon>0$.
Instead, $\mathbb{A}^{k}(\rea)$ is the affine $k$-dimensional space: the set of $\rea$-points is the same as before, but the topology is radically different; here, as in any algebraic variety, it is used the Zariski topology: a closed subset (the complement of open subset) is the zero set of some polynomials in $k$ variables. In contrast to the standard topology, the Zariski topology is not Hausdorff\footnote{%
A topological space $Y$ is Hausdorff if it is possible to separate two distinct points with two disjoint open sets.}. %
This non-trivial example shows the difference between the two topologies.

Since the objects (the varieties) are different, so the maps among those will be. Let us start, as previous, with the standard case and let $f(x)=\frac{1}{x}$ be a real function on an open subset of $\rea$; the function is not defined for $x=0$ and, similarly, the zero value is not in the image of $f$. This happens because the set $\{0\}$ is a closed point. 
In the Zariski topology there are a lot of closed sets more; in particular algebraic map can be undefined over a finite number of those. 
For the reader who meets algebraic maps for the first time, it is useful to know that such maps have not uniquely way to be written. 
For instance, let $X$ be the algebraic variety given by $y^2=x^3+x^2$ in $\mathbb{A}^{2}(\rea)$ and let $f:X\rightarrow \rea$ be $f(x,y)=(\nicefrac{y}{x})^2$; it seems that $f$ is not defined in $x=0$, but using the defining polynomial of $X$, one has
\[
	f(x,y)=\left( \frac{y}{x}\right)^2=\frac{y^2}{x^2}=\frac{x^3+x^2}{x^2}=x+1.
\]
So, for $x=0$, $f$ assumes value $1$ and $f$ can be written either as $(\nicefrac{y}{x})^2$ or as $x+1$. I refer to \cite{Hart} for the undefined geometrical notions in this section and in the next sections.

\subsection{Local Dimension and Local Parameterization}\label{sec-dimension}
The first natural question regarding the geometry of an algebraic variety $V$ is about its dimension, or, with different words, the number of free parameters that we need to use to parameterize it, globally or locally. 

A very nice introduction to dimension theory is in Chapter II.8 of \cite{Eis}, in which there are equivalent definitions for the dimension of an algebraic variety: for example using the \emph{Noether Normalization Lemma} (see \cite{AM}). However, there exists a \emph{finite}\footnote{%
An open subset $U$ of a scheme $X$ is affine if there exists a ring $R$ such that $U=Spec(R)$; the Zariski topology, that we have discussed before, comes from the notion of $Spec$ of a ring. A map between algebraic schemes, $f:X\rightarrow Y$, is a finite map if there exists an open affine covering of $Y$,  $\{U_i=Spec(B_i)\}_{i\in I}$, such that for each $i$, $f^{-1}(U_i)=Spec(A_i)$ where $A_i$ is a finite $B_i$-algebra.
}%
map $\varphi: V \rightarrow \mathbb{A}^d(\rea)$, where $d$ is the dimension of $V$. 
It is extremely difficult to compute $d$ (see \cite{hardness}, \cite{num-sim}): there are some methods in computational algebra where the complexity of the computation depends on the number of generators $m$, the number of variables $N$ and the degree of the polynomials $q$ as $m^{o(1)}q^{o(N)}$ (see \cite{comp-dim}, \cite{comp-dim2}). 
Hence, it is clear that with $N$ and $m$ bigger enough any symbolic approach to this problem fails.

Moreover, since we work with real numbers, there is an additional complication: working with the complex field guarantees that the varieties are connected. 
For instance the circle $\mathcal{C}_{\com}$ given by $x^2+y^2=-1$ is a one-dimensional variety in the affine complex plane and, if we compactify it, one knows that $\mathcal{C}_{\com}$ is projectively equivalent to (the compactification of) the hyperbola $\mathcal{I}_{\com}$, defined by $xy=1$. 
Instead, working in the real field, $\mathcal{C}_{\rea}$ is the empty set and $\mathcal{I}_{\rea}$ is non-connected: $\mathcal{I}_{\rea}$ is a union of two pieces $U_1=\{xy=1, x,y\geq 0\}$ and $U_2=\{xy=1, x,y\leq 0\}$. Trivially, $\mathcal{C}_{\rea}$ and $\mathcal{I}_{\rea}$ are far to be equivalent, just as sets. 

Thus, an affine real variety $V$ should be seen as a union of connected components, $V=\cup_{i\in I} U_i$; each one of this component $U_i$ has a proper dimension $d_i$ with $0\leq d_i\leq d$. 

In order to perform any kind of optimization over an algebraic variety $V$, we need to parameterize it. In fact, we need to control the set $V$ using free parameters $\{p_1,\dots, p_k\}$ where $k$ is at least the dimension $d$ of $V$. 
More explicitly, given a connected component decomposition, $V=\cup_{i\in I} U_i$, we need to know the functions
\[
	\theta_{i}: \rea^{d_i} \rightarrow U_i,
\]
for all $i\in I$. This problem, translated in the algebraic geometry form, 
\[
	\theta_{i}: \mathbb{A}^{d_i}(\rea) \rightarrow U_i,
\]
is the problem of finding the \emph{local parameterization of an real algebraic variety}. I remark, that, even if the meaning and the appearance of the two $\theta_{i}$ are exactly the same, the algebraic one carries all the differences discussed.

If we narrow down our investigation, for a moment, and consider only curves (one dimension algebraic varieties) in the affine plane (see \cite{fulton}, \cite{curves:96}), the que\-stion is the following: \emph{is the curve rational?} We know that non-rational curves exist. So not all varieties $X$ are birational equivalent (generalization of rational concept) to $\mathbb{P}^{d}(\rea)$. The theory of Gr\"{o}ebner basis (see \cite{Cox}) or the Newton Polygon (see for recently connection with tropical geometry \cite{AS}) could be applied to tackle this problem, but, with a lot of generators, the computational time is not acceptable.

The problem is complicated, hence, excluding some particular case, finding local parameterizations of an algebraic variety generated by a large number of polynomials in a large affine space is an open problem (see \cite{openproblems}).

\section{Optimization over an Algebraic Variety}\label{sec-optimization-problem}
In this section, we state the optimization problem and the suggested algorithm in the most abstract description. 

Let $X\subset \rea^N$ be the set of real solutions of a system of $m$ polynomial equations
\begin{equation*}
	\left\lbrace  	\begin{array}{l}
 				f_1=0,\\
				f_2=0,\\
				\vdots\\
				f_m=0.
			\end{array}
	\right.
\end{equation*}
Let $N$ and $m$ be great enough. Let $\mathcal{G}$ be a positive real values function defined over $X$ and let us consider the optimization problem consisting of finding $\mathbf{x}\in X$ minimizing the value $\mathcal{G}(\mathbf{x})$ in $\mathcal{G}(X)\subset \rea_+$. 

In applications (see for examples \cite{regular_system}, \cite{complete_algorithm}), it is usual subdividing the generators in a finite number of sets. Let $\{f_i\}_{i=k_{j-1},\dots, k_j}$ be this subdivision in $c$ sets (with $k_0=1$ and  $k_c=m$). 
We define the algebraic varieties 
\[
	X_j=\{x\in \mathbb{A}_{\rea}^N:\, f_i(x)=0,\,\, \forall i=1,\,\dots,\, k_j\},
\]
for $j=1,\dots, c$ and $X_0=\mathbb{A}_{\rea}^N$ and $X_{c}=X$. Hence, there is the following chain of $c$ inclusions:
\[
	\mathbb{A}_{\rea}^N=X_0 \supseteq X_1 \supseteq \dots \supseteq X_{c-1} \supseteq X_c=X;
\]
if $\{f_i\}_{i=1,\dots, m}$ is a minimal basis for the ideal (see \cite{Cox}) of $X$, then inclusions are strict.
 
If there is no motivation to make a subdivision of the generators, then it is possible to choose, for $0<i<v$, $k_i=i\lfloor\nicefrac{m}{v}\rfloor$, with $v$ a suitable positive integer: the length of the chain will be exactly $c=v+1$. One should choose $v$ coherently with the hardness of solving the system of polynomials.

Applications (see \cite{weight_generator}) also suggest that some constrains $f_i(x)=0$ are more important than the others. So, let $\mathbf{r}=(r_i)_{i=1,\dots, m}$ be an element of the simplex
\[
    \Delta_m=\{(r_i)_{i=1,\dots, m}: r_i>0, \sum_{i=1}^m r_i=1\};
\]
$r_i$ is the weight of the polynomial equation $f_i(x)=0$. Thus, for $j=1,\,\dots,\, c$ the positive real value function $\mathcal{H}_j:\mathbb{A}_{\rea}^N\rightarrow \rea_+$ is defines as
\[
	\mathcal{H}_j(x)=\sum_{i=1}^{k_j} r_i|f_i(x)|.
\]
Let $\epsilon=\{\epsilon_1 < \epsilon_2 < \dots < \epsilon_c\}$ be a set of positive non increasing real numbers; we define the $\epsilon$-tubular neighbourhood of $X_j$ being
\[
	\epsilon(X_j)=\{x\in \mathbb{A}_{\rea}^N: \mathcal{H}_k(x)<\epsilon_k, k=1,\dots, j\}.
\]

Finally, we propose the algorithm in Table \ref{alg:general} and we describe the objects used in Table \ref{tab:para-general}.
\begin{table}[htbp]
\begin{tabbing}
{\bf \sc Evo-Alg-Variety}($N$, $\{f_i\}$, $\{k_j\}$, $\mathcal{G}$, $\mathbf{r}$, $\epsilon$, $\{S_j\}$)\\
1. $S_0=\emptyset$;\\
2. {\bf For} 	\= ($y$ from $0$ to $c-1$)\\
3. 	\> EA-minimize ($\mathcal{H}_{y+1}$ over $\epsilon(X_y)$ starting from $S_y$);\\
4. 	\> save-good-solution ($S_{y+1}$);\\
5. {\bf End for};\\
6. EA-minimize ($\mathcal{G}$ over $\epsilon(X)$ starting from $S_c$);
\end{tabbing}
\caption{The Algorithm for the optimization of $\mathcal{G}$ over the algebraic variety $X$}
\label{alg:general}
\end{table}

The kernel of the optimization is the fact that we sequentially produce an homogeneous covering of the affine varieties $X_j$: in each $y$-cycle we optimize $\mathcal{H}_{j+1}$ on $\epsilon(X_j)$. In other words, the fitness function of every $y$-cycle coincides with the constrain of $\epsilon(X_{j+1})$.

For this reason, in addition, we save the points $\tilde{x}$ such that $\mathcal{H}_{j+1}(\tilde{x})<\epsilon_{j+1}$, i.e we save the elements $\tilde{x}\in \epsilon(X_j)$ such that $\tilde{x}\in \epsilon(X_{j+1})$. Those feasible solutions are the suggested starting points for the evolutionary algorithm of the next cycle.

These information can be used for any optimization over the variety $X$. In fact, in the end of the algorithm, we run the last optimization over the feasible points of $\epsilon(X_c)=\epsilon(X)$ minimizing the function $\mathcal{G}$.

\begin{table}[htbp]
\begin{center}
\begin{tabular}{l|l}
	Object & Description\\ \hline \hline
 	$N$ & number of variables\\
	$m$ & number of equations\\
	$\{f_i\}$ & polynomial equations\\
	$\{k_j\}$ & index of the subdivision\\
	$c$ & number of the sets subdivision\\
	$\mathcal{G}$ & function to optimize\\
	$\mathbf{r}$ & weights for the polynomial equations\\
	$\epsilon$ & numbers defining the $\epsilon$-tubular neighbourhood $\epsilon(X_j)$ of $X_j$\\	
	$S_j$  & set of feasible solutions for $\epsilon(X_j)$
\end{tabular}
\caption{Objects used in the algorithm described in Table \ref{alg:general}}
\label{tab:para-general}
\end{center}
\end{table}

Since we work in a real field and since the $N$ and $m$ are large enough to make impossible the analytic computation of the local parameterizations $\theta_{i}: \rea^{d_i} \rightarrow U_i$ (with $X=\cup_{i\in I} U_i$), then the use of the EAs is fully justified.

What fallows it is devoted to an application of this abstract algorithm to a numerical analysis open problem.

\section{The Runge-Kutta Methods}\label{sec:RK-def}
Problems involving ordinary differential equations (ODEs) can be always reformulated as set of $N$ coupled first-order differential equations for the functions $y_i$, having the general form 
\begin{equation}\label{eq1}
\frac{dy_i(t)}{dt} = g_i(t, y_1, \dots, y_{N})\, \text{ with } i=1, 2, \dots, N.
\end{equation}
A problem involving ODEs is  completely specified by its equations and by boundary conditions of the given problem. Boundary conditions are algebraic conditions: they divide into two classes, \emph{initial value problems} and \emph{two-point boundary value problems}.
In this work will consider the initial value problems, where all the $y_i$ are given at some starting value 
$t_0$, and it is desired to find the $y_i$'s at some final point $t_f$.
In general, it is the nature of the boundary conditions that determines which numerical methods to use.
For instance the basic idea of the \emph{Euler's method} is to rewrite the $dy$'s and $dt$'s in (\ref{eq1}) as finite steps $\delta y$ and $\delta t$, and multiply the equations by $\delta t$. This produces algebraic formulas $\delta y$ with the independent variable $t$ increased by one \emph{stepsize} $\delta t$; for very small stepsize a good approximation of the differential equation is achieved. 

The Runge-Kutta method is a practical numerical method for solving initial value problems for ODEs \cite{Hai}.
Runge-Kutta methods propagate a numerical solution over an $N+1$-dimensional interval by combining the information from several Euler-style steps (each involving one evaluation of the right-hand $g$'s), and then using the information obtained to match a Taylor series expansion up to some higher order. Runge-Kutta is usually the fastest method when evaluating $g_i$ is cheap and the accuracy requirement is not ultra-stringent.

In what follows, we summarize some basic notions about Runge Kutta me\-thods (RKms) and we remark the results that we are going to use.

Without lost of generality we can consider, instead of (\ref{eq1}), the autonomous system
\begin{equation}\label{eq:autonomous}
	\left\{
	\begin{array}{l}      
	y'=g(y) \\ 
	y(t_0)=y_0
	\end{array}
	\right.
\end{equation}
given by the function $g:\Omega \rightarrow \rea^n$ (with $\Omega$ being an open subset of $\rea^{n}$) such that the Cauchy problem makes sense. Moreover, we suppose that $g$ satisfies the hypothesis of Schwartz' Theorem for mixed derivatives of each order that we will use.

The Runge-Kutta methods are a class of methods to approximate the exact solution of (\ref{eq:autonomous}). The structure of $s$-level \emph{implicit} RKm is the $i$-stage value
\begin{eqnarray*}
	k_i&=&g(y_0+h\sum_{j=1}^{s}a_{i,j}k_j),\\
\end{eqnarray*}
for $i=1,\,\dots,\, s$. The numerical solution $y_1\in \rea^n$ of the problem (\ref{eq:autonomous}) is 
\[
	y_1=y_0+h\sum_{i=1}^{s}w_i k_i;
\]
where $h$ is the step size. In the implicit case it is necessary to invert (almost always numerically) the function $g$ to reach the $k_i$ values. To avoid that, the \emph{explicit} RKms are often used: $a_{i,j}=0,\, \forall i\geq j$ and so the $k_i$ are obtained sequentially.
Despite this nice feature, there are some problems (the stiffness problems) where only \emph{implicit} RKms are required.

The Butcher Tableau \cite{Butcher:08}, in Tables \ref{tab:BT}, shows all the used parameters: the $w_i$ and the $a_{i,j}$ are real numbers and characterize a given method with respect another one. A RKm for a non-autonomous system involves also the $c_i$ para\-meters regulating the step size in the time axis. In the autonomous restriction such a parameters can be avoided because they fulfil the following \emph{autonomy conditions}:
\begin{equation}\label{RK-condition-autonomo}
	c_i=\sum_{j=0}^{i-1}a_{i,j},\, \forall i=1,\,\dots,\, s.
\end{equation}

\begin{table}[htbp]
\begin{center}
\begin{tabular}{c|cccccc}
	$c_1$ & $a_{1,1}$		& $a_{1,2}$	&$a_{1,3}$		& $\cdots$ &$a_{1,s-1}$& $a_{1,s}$ \\ 
	$c_2$ & $a_{2,1}$ 		& $a_{2,2}$ 	&$a_{2,3}$		& $\cdots$ &$a_{2,s-1}$& $a_{2,s}$ \\
	$c_3$ & $a_{3,1}$		& $a_{3,2}$ 	&$a_{3,3}$		& $\cdots$ &$a_{3,s-1}$& $a_{3,s}$ \\ 
	$\vdots$ & $\vdots$ 		& $\vdots$ 	&$\vdots$		&  $\ddots$&$\vdots$& $\vdots$ \\
	$c_{s-1}$ & $a_{s-1, 1}$ 		& $a_{s-1,2}$	&$a_{s-1,3}$		& $\cdots$& $a_{s-1,s-1}$& $a_{s-1,s}$\\
	$c_s$ & $a_{s,1}$ & $a_{s,2}$	&$a_{s,3}$	& $\cdots$	& $a_{s,s-1}$&$a_{s,s}$\\ \hline
	 &$w_1$ 			&$w_2$			&$w_3$			& $\cdots$ &$w_{s-1}$		& $w_{s}$\\
\end{tabular}
\end{center}
\caption{A Butcher Tableau for an $s$-level implicit Runge-Kutta method.}
\label{tab:BT}
\end{table}

The order of approximation of a RKm is the order of accuracy with respect to $h$ of the \emph{local truncation error},
\[
	\sigma_1(h)=y(t_0+h)-y_1(h),
\]
measuring how much the numerical solution $y_1(h)$ matches with the exact solution $y(t_0+h)$.

To understand the relation between its parameters and its order of approximation, in \cite{Hai} it is developed a combinatorial interpretation of the equations that the parameters of a RKm must satisfy (so called \emph{order condition equations}): there exits a bijection between the set of $p$-elementary differentials and the set, $T_p$, of rooted labelled trees of order $p$. 
Moreover, the following theorem shows how to obtain the condition equations of order $q$ of an $s$ levels RKm.

\begin{teo}\label{theo-order-condition}
	A Runge-Kutta method defined by a Butcher Tableau is of order $q$ if and only if
	\begin{equation}\label{constrain}
		\sum_{j=1}^s w_j \Phi_j(t)=\frac{1}{\gamma(t)}
	\end{equation}
	for all the rooted labelled trees $t$ of order at most $q$.
\end{teo}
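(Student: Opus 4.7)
The strategy is to Taylor-expand both the exact solution $y(t_0+h)$ and the numerical solution $y_1(h)$ in $h$ about $y_0$, organize each expansion as a sum over rooted trees (indexing the elementary differentials $F(t)(y_0)$), and match coefficients of like powers of $h$. Once the two series are lined up tree by tree, the equivalence with (\ref{constrain}) is immediate, provided the family $\{F(t)\}_{t \in T_p}$ is linearly independent---a standard genericity reduction, realized for instance by taking $g$ polynomial of sufficiently high degree.

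For the exact solution I would apply the multivariate chain rule repeatedly to $y'=g(y)$ to obtain
\[
	y^{(p)}(t_0) = \sum_{t \in T_p} \alpha(t)\, F(t)(y_0),
\]
where $\alpha(t) = p!/(\sigma(t)\gamma(t))$ counts the monotone labellings of $t$, $\sigma(t)$ is the order of its symmetry group, and $\gamma(t)$ is the density appearing in (\ref{constrain}). Feeding this into the Taylor series of $y$ gives
\[
	y(t_0+h) = y_0 + \sum_{p \ge 1} h^p \sum_{t \in T_p} \frac{1}{\sigma(t)\gamma(t)}\, F(t)(y_0).
\]

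For the numerical solution I would expand the defining relation $k_i = g\bigl(y_0 + h \sum_j a_{i,j} k_j\bigr)$ recursively: each successive Taylor expansion of $g$ about $y_0$ attaches a new vertex to a growing rooted tree, every edge contributing a factor $a_{i,j}$ and every vertex a partial derivative of $g$ evaluated at $y_0$. A careful bookkeeping of this recursion---the content of the bijection recalled just before the theorem---yields
\[
	y_1(h) = y_0 + \sum_{p \ge 1} h^p \sum_{t \in T_p} \frac{1}{\sigma(t)} \Big(\sum_{j=1}^s w_j \Phi_j(t)\Big) F(t)(y_0).
\]
Subtracting the two series, the local truncation error satisfies $\sigma_1(h) = O(h^{q+1})$ if and only if the $h^p$-coefficient vanishes for every $p \le q$; linear independence of $\{F(t)\}_{t \in T_p}$ then forces $\sum_{j=1}^s w_j \Phi_j(t) = 1/\gamma(t)$ for every $t \in T_p$ with $p\le q$, which is exactly (\ref{constrain}). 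The reverse implication is read off the same identity in the opposite direction.

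The main obstacle is precisely the recursive expansion of $k_i$: one must prove that the contribution of a rooted labelled tree $t$ to the $h^{p-1}$-term of $k_i$ is exactly $\Phi_i(t)/\sigma(t)$, i.e.\ that the elementary weights $\Phi_i(t)$---sums of products of $a$-entries along the paths of $t$ from stage $i$ downward---are precisely what the chain rule manufactures when $g$ is successively substituted into itself. Since this combinatorial identity is the main technical result recorded in \cite{Hai}, I would invoke it rather than re-derive it from scratch, and the theorem follows by matching the two expansions as described above.
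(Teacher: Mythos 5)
Your sketch is correct and is essentially the standard argument from \cite{Hai}: Taylor-expand the exact and numerical solutions as sums over rooted trees of elementary differentials (with the identity $\alpha(t)=p!/(\sigma(t)\gamma(t))$ aligning the two series tree by tree), invoke linear independence of the elementary differentials to match coefficients, and read off the order conditions. Note that the paper itself gives no proof of this theorem --- it is quoted directly from \cite{Hai} --- so there is nothing in the text to diverge from; your outline, including the correctly flagged delicate step (the recursive expansion of the stage values $k_i$ producing the elementary weights $\Phi_i(t)/\sigma(t)$, which you reasonably delegate to the reference), is a faithful reconstruction of the cited proof.
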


Using this result, in \cite{SF} the system of equations is computed symbolically. One knows how to obtain $T_p$ and so the number of order condition equations. 
It blows up like the factorial with respect to the order $q$ (see Table \ref{tab:NumberOfTreesUpToOrder10}): this fact plays a key role in the choice of the evolutionary algorithm for the solution of the problem.
\begin{table}[htbp]
	\begin{center}
		\begin{tabular}{l||llllllllll}
			order 	   & 1 	& 2	& 3	& 4	& 5	& 6	& 7	& 8	& 9	& 10\\ \hline
			equations  & 1 	& 1	& 2	& 4	& 9 	& 20	& 48 	&115 	&286	&719
		\end{tabular}
	\end{center}

	\caption{Number of the order condition equations up to order $10$.}
	\label{tab:NumberOfTreesUpToOrder10}
\end{table}

We did not write the explicit form of $\Phi_j(t)$ and $\gamma(t)$ because these involve more notions about the rooted labelled trees (we refer to \cite{Hai} for more details). What we need to know, here, is that $\Phi_j(t)$ is written as a sum of $a_{i,j}$'s products and $\gamma(t)$ is an integer number coming from the tree's structure. 

For the reader do not use to this numerical analysis branch, we sketch some examples.

\begin{ex}
The parameters of a two levels explicit RKm have to satisfy 
\begin{equation*}
	\left\lbrace \begin{aligned}
	w_1 + w_2 &= 1;\\
	w_2a_{2,1} &=\frac{1}{2}.
	\end{aligned}\right. 
\end{equation*}
For instance the Euler method has $w_1=0$, $w_2=1$ and $a_{2,1}=1$.
\end{ex}

\begin{ex}
For an implicit three levels RKm of order three, the parameters fulfil
\begin{equation*}
	\left\lbrace \begin{aligned}
	\sum_{j=1}^3 w_j =&1;\\
	\sum_{j,k=1}^3 w_j a_{j,k} =&\frac{1}{2};\\
	\sum_{j,k,l=1}^3 w_j a_{j,k} a_{j,l}=&\frac{1}{3};\\
	\sum_{j,k,l=1}^3 w_j a_{j,k} a_{k,l}=&\frac{1}{6}.
	\end{aligned}\right. 
\end{equation*}
\end{ex}

The next theorem clarifies how the Taylor series of the numerical and the exact solution are. In fact, the $h^p$-terms of their series are expressed using $G^J(t)(y_0)$, the $J$-component of the elementary differential of $g$ corresponding to the tree $t$ evaluated at the point $y_0$. For every $G^J(t)(y_0)$ an integer coefficient $\alpha(t)$ appears naturally to weight its contribute in the Taylor sequence.

\begin{teo}\label{theo-local-error}
	If the Runge-Kutta method is of order $p$ and if $g$ is ($p+1$)- times continuously differentiable, we have
	\begin{equation}\label{sigma-local-error}
		y^J(y_0+h)-y_1^J=\frac{h^{p+1}}{(p+1)!}	\sum_{t\in T_{p+1}} \alpha(t)e(t) G^J(t)(y_0) + \vartheta(h^{p+2})
	\end{equation}
	where
	\begin{equation}\label{local-error}
		e(t)=1-\gamma(t)\sum_{j=1}^s w_j \Phi_j (t)
	\end{equation}
	is called the \emph{error coefficient} of the tree $t$.
\end{teo}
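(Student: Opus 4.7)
The plan is to obtain the expansion of both the exact solution $y(t_0+h)$ and the numerical solution $y_1$ as Taylor series in $h$ around $h=0$, and then subtract them term-by-term, exploiting the bijection between rooted labelled trees and elementary differentials referenced from \cite{Hai}.

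First I would expand $y(t_0+h)$ by Taylor's theorem to order $p+1$ in $h$: since $g$ is $(p+1)$-times continuously differentiable, repeated application of $y'=g(y)$ together with the chain rule shows that each derivative $y^{(k)}(t_0)$ is a linear combination of elementary differentials $G^J(t)(y_0)$ indexed by rooted labelled trees $t$ of order $k$, with the combinatorial coefficient arising exactly as $\alpha(t)/\gamma(t)$ (this is the content of the combinatorial interpretation developed in \cite{Hai}). Grouping $h^k$-terms by trees of order $k\le p+1$ produces the exact-solution side.

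Next I would expand the numerical solution. Writing out the intermediate stages $k_i=g\bigl(y_0+h\sum_j a_{i,j}k_j\bigr)$ as formal Taylor series in $h$ and substituting iteratively into $y_1=y_0+h\sum_i w_i k_i$, I would recognize that the coefficient of $h^{k}$ groups into contributions indexed again by trees of order $k$: each tree contributes $\alpha(t)\,\bigl(\sum_j w_j \Phi_j(t)\bigr)G^J(t)(y_0)$, where $\Phi_j(t)$ is precisely the product of $a$-entries dictated by the tree's edge structure. The matching of tree structure with the nested sums in the RK stages is again standard from \cite{Hai}.

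Subtracting the two series, the $h^k$-coefficient of each tree $t\in T_k$ becomes proportional to
\[
\frac{1}{\gamma(t)}-\sum_{j=1}^s w_j \Phi_j(t) = \frac{e(t)}{\gamma(t)}.
\]
By Theorem \ref{theo-order-condition} the hypothesis that the method has order $p$ forces $e(t)=0$ for every $t$ of order $\le p$, so all terms with $k\le p$ vanish. The remaining leading contribution comes from $T_{p+1}$, and after absorbing the $\gamma(t)$ into the coefficient $\alpha(t)$ appropriately (which is where the prefactor $1/(p+1)!$ emerges from the Taylor formula), one obtains the stated formula (\ref{sigma-local-error}). The $\vartheta(h^{p+2})$ remainder is controlled by the $(p+1)$-fold continuous differentiability of $g$, giving a uniform bound on the truncated Taylor tails of both series.

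The main obstacle is the combinatorial bookkeeping: justifying that the Taylor coefficients of the nested stage values $k_i$ reorganize exactly into the tree-indexed sum with coefficient $\alpha(t)\sum_j w_j\Phi_j(t)$, and similarly that $y^{(k)}(t_0)$ reorganizes with coefficient $\alpha(t)/\gamma(t)$. Rather than re-deriving this correspondence, I would invoke the bijection between $p$-elementary differentials and $T_p$ already established in \cite{Hai} and quote the resulting Butcher expansions, reducing the proof to the algebraic cancellation argument sketched above.
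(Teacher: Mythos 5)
Your proposal is correct and follows the same route as the paper, which states this theorem without proof as a result imported from \cite{Hai}: the standard Butcher-series derivation expands both the exact and numerical solutions over rooted trees, subtracts, and uses Theorem \ref{theo-order-condition} to cancel all terms of order at most $p$. Your handling of the $\gamma(t)$ versus $\alpha(t)$ normalization is slightly loose (the clean statement is that the exact solution contributes $\alpha(t)$ and the numerical one $\alpha(t)\gamma(t)\sum_j w_j\Phi_j(t)$ per tree, so their difference is exactly $\alpha(t)e(t)$), but you flag the bookkeeping and correctly defer it to the cited reference, just as the paper does.
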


The error coefficient $e(t)$ expresses the difference of the Taylor series of the numerical and the exact solution at the term of the elementary differential given by $t$.

The optimization problem that we want to solve is minimizing the local approximation error of order $q+1$ in the set of RKms of order $q$: in other words, after finding feasible RKms of order $q$, our aim is to optimize such errors $e(t)$ for all the trees of order $q+1$.

\section{Runge-Kutta methods as an Algebraic Variety}\label{algebraic-variety}
In this section we present the problem of minimize the local error of a RKm as an optimization problem over an algebraic variety.
We define
\[
	V_{q}^s=\left\lbrace \mbox{$s$-level Runge-Kutta methods with accuracy order $q$}\right\rbrace 
\]
and we call $EV_{q}^s$ its subset having only the explicit ones.

Since the Table \ref{tab:BT}, the RKms have $s^2+2s$ free coefficients. The autonomous conditions (\ref{RK-condition-autonomo}) show that the parameters $c_i$ are dependent by $\{a_{i,j}\}$. 
Moreover, the order conditions in Theorem \ref{theo-order-condition} are polynomials, hence $V_{q}^s$ is an affine algebraic variety in the affine real space $\mathbb{A}^{s(s+1)}(\rea)$, minimally defined by the following polynomials in $s(s+1)$ variables:
\begin{equation*}
  \sum_{j=1}^s w_j \Phi_j(t)=\frac{1}{\gamma(t)}, \,\forall t\in T_1\cup T_2 \cup \dots \cup T_q.
\end{equation*}
Thus, we rewrite the Theorem \ref{theo-order-condition} as:
\begin{teo}\label{theo-variety} Let $\mathbf{x}=\left((a_{i,j})_{1\leq j,i \leq s} , (w_1, w_2, \dots, w_{s}) \right)$, then
\[
	\mbox{$\mathbf{x}$ is the parameters set of an $s$ levels RKm of order $q$} \Leftrightarrow \mathbf{x}\in V_{q}^s.
\]
\end{teo}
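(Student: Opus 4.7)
The plan is to deduce this result essentially as a direct restatement of Theorem \ref{theo-order-condition} together with the explicit polynomial description of $V_q^s$ given in the paragraph immediately preceding the statement. First I would fix the ambient affine space $\mathbb{A}^{s(s+1)}(\rea)$ with coordinates $\mathbf{x}=((a_{i,j}),(w_j))$ and note that, under the autonomy conditions (\ref{RK-condition-autonomo}), the $c_i$ are polynomial (in fact, linear) functions of the $a_{i,j}$, so a single point $\mathbf{x}$ in this affine space determines a full Butcher tableau as in Table \ref{tab:BT}. Thus parameter tuples for $s$-level RKms and points of $\mathbb{A}^{s(s+1)}(\rea)$ are in bijection.

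For the direction ($\Rightarrow$), I would assume that $\mathbf{x}$ is the parameter set of an $s$-level RKm of order $q$. By Theorem \ref{theo-order-condition}, applied once for each $t\in T_1\cup T_2\cup\dots\cup T_q$, the equalities (\ref{constrain}) hold simultaneously. These are precisely the polynomial equations cut out by the minimal generating set listed just before the statement of Theorem \ref{theo-variety}, so $\mathbf{x}$ lies in their common zero locus, namely $V_q^s$.

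For the direction ($\Leftarrow$), I would start from $\mathbf{x}\in V_q^s$. By definition of $V_q^s$, the tuple $\mathbf{x}$ satisfies (\ref{constrain}) for every rooted labelled tree of order at most $q$. Invoking the converse implication in Theorem \ref{theo-order-condition} (the statement is already an ``if and only if'') yields that the RKm determined by $\mathbf{x}$ has order of accuracy $q$, completing the equivalence.

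The only subtle point, and hence the main ``obstacle,'' is a bookkeeping one rather than a mathematical one: I must check that the polynomial system used to define $V_q^s$ really accounts for all defining relations and nothing else. Concretely, this means verifying that (i) the autonomous reduction (\ref{RK-condition-autonomo}) legitimately eliminates the $c_i$-variables without dropping any order condition, and (ii) no additional non-polynomial condition (e.g.\ implicit vs.\ explicit, or a non-vanishing requirement on some $w_j$) is tacitly folded into the phrase ``$s$-level RKm of order $q$.'' Once this is made explicit, no further argument is needed, because the full mathematical content is already supplied by Theorem \ref{theo-order-condition}.
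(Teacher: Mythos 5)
Your proof is correct and follows essentially the same route as the paper's: both directions are immediate applications of the ``if and only if'' in Theorem \ref{theo-order-condition}, given that $V_q^s$ is by construction the common zero locus of the order-condition polynomials (\ref{constrain}) in $\mathbb{A}^{s(s+1)}(\rea)$. Your extra remarks on eliminating the $c_i$ via the autonomy conditions (\ref{RK-condition-autonomo}) merely make explicit what the paper already states in the paragraph preceding the theorem.
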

\begin{proof}
Considering $\mathbf{x}$ as the parameters of an $s$ levels RKm of order $q$, then $\mathbf{x}$ satisfies the order condition equations (\ref{constrain}) and then $\mathbf{x}$ belongs to $V_{q}^s$ as a point in the affine space $\mathbb{A}^{s(s+1)}(\rea)$. 
Vice-versa if the point $\mathbf{x}$ of $\mathbb{A}^{s(s+1)}(\rea)$ belongs to $V_{q}^s$, then $\mathbf{x}$ satisfies the equations (\ref{constrain}), that are also the order condition equations of the $s$-level RKms of order $q$.
\end{proof}

Similarly the algebraic variety $EV^s_q$ in $\mathbb{A}^{\frac{s(s+1)}{2}}(\rea)$, minimally defined by the same polynomial equations and by $\{a_{i,j}=0\,\, \forall i\geq j\}$, is the variety of the explicit $s$ levels RKms of order $q$; $EV^s_q$ is also a sub-variety of $V^s_q$. The Corollary \ref{theo-variety} holds too. 

\begin{cor}\label{cor-variety-ex}
Let $\mathbf{y}=\left((a_{i,j})_{1\leq i < j \leq s} , (w_1, w_2, \dots, w_{s}) \right)$, then
\[
	 \mbox{$\mathbf{y}$ is the parameters set of an $s$ levels explicit RKm of order $q$} \Leftrightarrow \mathbf{y}\in EV_{q}^s.
\]
\end{cor}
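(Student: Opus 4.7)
The plan is to obtain Corollary \ref{cor-variety-ex} as a direct specialization of Theorem \ref{theo-variety} rather than redoing the combinatorial content of Theorem \ref{theo-order-condition}. By definition, $EV_q^s$ is cut out in $\mathbb{A}^{s(s+1)}(\rea)$ by the same order condition polynomials \eqref{constrain} together with the linear relations $a_{i,j}=0$ for $i\geq j$; eliminating the vanishing coordinates embeds $EV_q^s$ into $\mathbb{A}^{s(s+1)/2}(\rea)$, with $\binom{s}{2}$ free off-diagonal entries plus $s$ weights giving exactly the expected $s(s-1)/2 + s = s(s+1)/2$ parameters.

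For the forward direction, starting from a $\mathbf{y}$ that records the free parameters of an explicit $s$-level RKm of order $q$, I would reconstruct a full tableau $\mathbf{x}\in \mathbb{A}^{s(s+1)}(\rea)$ by inserting zeros in the lower-triangular (including diagonal) positions. Every explicit RKm is in particular an implicit RKm obeying the order condition equations, so Theorem \ref{theo-variety} yields $\mathbf{x}\in V_q^s$; combined with the imposed zero pattern, this gives $\mathbf{y}\in EV_q^s$.

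For the reverse direction, if $\mathbf{y}\in EV_q^s$, then its image $\mathbf{x}$ in $\mathbb{A}^{s(s+1)}(\rea)$ satisfies both the order conditions \eqref{constrain} and the vanishing conditions $a_{i,j}=0$ for $i\geq j$. Theorem \ref{theo-variety} then identifies $\mathbf{x}$ with the parameter set of an $s$-level RKm of order $q$, and the vanishing conditions are precisely the structural requirement characterizing explicit methods (so that the $k_i$ can be computed sequentially, as discussed in Section \ref{sec:RK-def}). Hence $\mathbf{y}$ parametrizes an explicit $s$-level RKm of order $q$.

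There is no genuine obstacle here, since all the substantive content — the polynomial nature of the order conditions and their equivalence with having order $q$ — is already carried by Theorems \ref{theo-order-condition} and \ref{theo-variety}. The only point requiring a brief bookkeeping check is that dropping the coordinates $a_{i,j}$ with $i\geq j$ is compatible with the minimal definition of $EV_q^s$, i.e.\ that no order condition polynomial becomes redundant or reintroduces a lower-triangular monomial after restriction; this is immediate since each $\Phi_j(t)$ is a sum of products of $a_{i,j}$'s whose values on the restricted locus are simply obtained by evaluating the surviving upper-triangular entries.
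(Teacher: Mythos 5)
Your proposal is correct and follows essentially the same route as the paper, which gives no separate argument for the corollary but simply asserts it as the restriction of Theorem \ref{theo-variety} to the locus $\{a_{i,j}=0,\ i\geq j\}$ defining $EV_q^s$ inside $V_q^s$. Your additional bookkeeping (inserting/deleting the zero coordinates to pass between $\mathbb{A}^{s(s+1)}(\rea)$ and $\mathbb{A}^{s(s+1)/2}(\rea)$, and noting that the zero pattern is exactly the structural characterization of explicit methods) only makes explicit what the paper leaves implicit.
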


Avoiding repetitions of $EV_{q}^s$ and $V_{q}^s$, every results that we state for $EV_q^s$ is true also for $V_q^s$.

We know that the $EV_{q}^s$ is composite by connected component: $EV_{q}^s=\cup_{i\in I}U_i$. One knows that for large value of $s$ and $q$ the computational time for computing the local parameterizations,
\[
	\theta_{q, i}^s: \mathbb{A}^{d_i}(\rea) \rightarrow U_i,
\]
is not acceptable. In fact a symbolic approach of the problem is not feasible (look at \cite{SF}).

Since minimizing the local error over the variety $EV_q^s$ involves those parameterizations, then the use of the EAs is legitimate.

\section{{\sc Evo-Runge-Kutta}}\label{evo-runge-kutta}

The aim of this research work is to obtain new explicit or implicit Runge-Kutta methods of maximal order $q$ that minimizes local errors (\ref{local-error}) of order $q+1$. Now, We state the optimization problem for $EV_q^s$, because we will show the numerical results about $EV_3^3$, $EV_4^4$ and $EV_4^5$, but everything hold with the opportune modifications for $V_q^s$ too.

We denote, as in Corollary \ref{cor-variety-ex},
\[
	\mathbf{x}=\left((a_{i,j})_{1\leq i < j \leq s} , (w_1, w_2, \dots, w_s) \right)\in \rea^{\frac{s(s+1)}{2}},
\]
where $\{a_{i,j}\}$ and $\{w_i\}$ are the coefficients of the Butcher Tableau of an explicit Runge-Kutta method. $\mathbf{x}$ is a RKm of accuracy order $q$ (i.e. it lies in $EV_{q}^s$) if and only if it respects the following constrains:
\[
	\sum_{j=1}^s w_j \Phi_j(t)=\frac{1}{\gamma(t)},\, \forall t\in T_1\cup T_2 \cup \dots \cup T_q.
\]
Moreover, we want that $\mathbf{x}$ minimizes the local errors:
\[
	e(t)=1-\gamma(t)\sum_{j=1}^s w_j \Phi_j (t), \, \forall t\in T_{q+1}
\]
The impossibility of analytically computing the functions $\theta_{q,i}^s$ and the hardness of analyse it numerically gives us the motivation to use evolutionary algorithms to face this difficult global optimization problem (see \cite{goldberg:02}). 
Moreover, we produce a remarkable set of feasible solutions that could be used for different optimization problems over the algebraic varieties $EV_{q}^s$ and $V_{q}^s$: for examples, minimizing the computational time or maximizing the convergence area $S_a$ for the implicit RKms.

Fixed the number of levels $s$ and the order of accuracy $q$ the feasible conditions for a RKm being of order $p$ are given by
\begin{equation}\label{eq:feasible}
	\frac{\sum_{t\in T_p} \alpha(t)|e(t)|}{\sum_{t\in T_p} \alpha(t)}< c_p,\, \forall p=1,2,\dots, q
\end{equation}
where the $c_p$ values is showed in the Table \ref{tab:c_p}. 
We require an error large at most $1\cdot 10^{-15}$ for each constrain, but we use the \emph{amplification factor} $4$ to widen the tubular neighbourhood of the varieties $EV_q^s$ and to develop a greater number of acceptable solutions, keeping the \emph{diversity} of the population.

\begin{table}[htbp]
\begin{center}
\begin{tabular}{c|c|c}
	order 	&  Number of condition equations & $c_p$\\ \hline \hline
	1	&1	 &$1*4*10^{-15}$\\
	2	&2	 &$2*4*10^{-15}$\\
	3	&4	 &$4*4*10^{-15}$\\
	4	&8	 &$8*4*10^{-15}$\\
	5	&17	 &$17*4*10^{-15}$\\
	6	&37	 &$37*4*10^{-15}$\\
	7	&85	 &$85*4*10^{-15}$\\
	8	&200	 &$200*4*10^{-15}$\\
	9	&486	 &$486*4*10^{-15}$\\
	10	&1205	 &$1205*4*10^{-15}$
\end{tabular}
\caption{Values of $c_p$ for each order up to $10$}
\label{tab:c_p}
\end{center}
\end{table}

The fitness function is
\begin{equation}\label{eq:fitness}
	\mathcal{F}_q(\mathbf{x})=\frac{\sum_{i=0}^{q+1} \sum_{t\in T_i} \alpha(t)|e(t)|}{\sum_{i=0}^{q+1} \sum_{t\in T_i} \alpha(t)}.
\end{equation}

Summarizing, we narrow down the general algorithm {\sc Evo-Alg-Variety} (see Section \ref{sec-optimization-problem}) fixing $X=EV^s_q$ and $N=\nicefrac{s(s+1)}{2}$. The minimal basis of generators for $EV^s_q$ is given by (\ref{local-error}) and we subdivide them in $q$ sets, following the subdivision in labelled rooted trees. 
We also use the combinatorial definition of the Runge-Kutta order conditions to set the weight: in fact 
\[
  r_t=\frac{\alpha(t)}{\sum_{i=0}^{q+1} \sum_{t'\in T_i} \alpha(t')}
\]
is the weight of the equation $1-\gamma(t)\sum_{j=1}^s w_j \Phi_j (t)=0$, with $t\in T_p$, as suggested by (\ref{sigma-local-error}). 
The final setup is the definition of the $\epsilon_p=c_p$, concordly with Table \ref{tab:c_p}. In conclusion, $\mathcal{H}_q=\mathcal{F}_q$ and $\mathcal{G}=\mathcal{F}_{q+1}$ as in (\ref{eq:fitness}).

This particular choice of $\mathcal{H}_q$ and $\mathcal{G}$ simplifies the algorithm: {\sc Evo-Runge-Kutta} has no final optimization of $\mathcal{G}$, because $\mathcal{G}=\mathcal{H}_{q+1}$ and the minimizing problem of $\mathcal{G}$ is seen as another repetition of the main cycle. 

\begin{table}[htbp]
\begin{tabbing}
{\bf \sc Evo-Runge-Kutta}($s$, $q_{start}$, $\lambda$, $\mu$, $\{c_p\}_{p=1,\dots, q}$, $acc-mean$, $acc-best$, \\
\hspace{60pt}$I_{max}$, $best$, $oldSolutions$, others par's)\\
1. $output=true$;\\
2. $q=q_{start}$;\\
3. {\bf While}	\= (output)\\
4. 		\> output=Evo-Runge-Kutta-Cycle ($s$, $q$,...);\\
5.		\> q=q+1;\\
6. {\bf End While};\\
\end{tabbing}
\caption{The Algorithm Evo-Runge-Kutta}
\label{alg:evo-runge-kutta}
\end{table}

However, in the Runge Kutta case, we do not know the dimension of the varieties $EV_q^s$. This approach allows as to solve this complication. In fact, for any choice of the number of levels $s$, we always run the {\sc Evo-Runge-Kutta} with $q=2$. 
The algorithm explores the space of solutions, $\mathbb{A}^N_{\rea}$, finding points in $\epsilon(EV_2^s)$; after that, under the constrain being in $\epsilon(EV_2^s)$, it finds feasible solutions in $\epsilon(EV_3^s)$; and so on until the variety $\epsilon(EV_{q+1}^s)$ is empty. 
{\sc Evo-Runge-Kutta} cannot prove that $\epsilon(EV_{q+1}^s)$ is empty. What happens is that the algorithm minimizes automatically the local error of order $q+1$ in $\epsilon(EV_q^s)$ (i.e. the fitness function $\mathcal{G}$) and it fails to find new feasible solutions in $\epsilon(EV_{q+1}^s)$.

\begin{table}[htbp]
\begin{tabbing}
{\bf \sc Evo-Runge-Kutta-Cycle}($s$, $q$, $\lambda$, $\mu$, $\{c_p\}_{p=1,\dots, q}$, $acc-mean$, $acc-best$, \\
\hspace{60pt}$I_{max}$, $best$, $oldSolutions$, others par's)\\
1. t=0;\\
2. inizializePopulation($pop^{(t)}_d$);  /* random generation of RKs */ \\
3. initialize($newSolution$); /* new array of feasible solutions */\\
4. evaluationPopulation($pop^{(t)}_d$); /* evaluation of RK systems */   \\
5. {\bf While} \= ((t$<$ $I_{max}$)\&\&bestError$<$ $acc-best$)\\
6.       \>copy ($pop^{(t)}_d$, $popMut^{(t)}_d$); \\
7.       \>mutationOperator($popMut^{(t)}_d$, $\mu$);\\              
8.	 \>isFeasible($popMut^{(t)}_d$, $oldSolutions$);\\
9.       \>evaluationPopulation($popMut^{(t)}_d$);\\	
10.       \>$pop^{(t+1)}_d$=selection($pop^{(t)}_d$, $popMut^{(t)}_d$);\\
11.      \>computeStatistics($pop^{(t+1)}_d$);\\'
12.	 \>saveSolutions($newSolution$);\\	
13.     \>t=t+1;\\
14. {\bf End While};\\
15. {\bf If} 	\> ($newSolution$ is empty) {\bf Then} {\bf Return} false;\\	
16. {\bf Else} \> {\bf Return} true;\\
17. {\bf End If};
\end{tabbing}
\caption{The main cycle of the algorithm {\sc Evo-Runge-Kutta}}
\label{alg:evo-runge-kutta-cycle}
\end{table}

In Table \ref{alg:evo-runge-kutta} we show the structure of the algorithm: the main cycle is explained in Table \ref{alg:evo-runge-kutta-cycle}. 
Table \ref{tab.para} shows the parameters of the evolutionary algorithm.

\begin{table}[htbp]
\begin{center}
\begin{tabular}{l|l|c}
	Parameter & description							& Value\\ \hline \hline 
 	$q_{start}$ & starting value for the order the RK-methods			& fixed\\
	$s$ & levels of the RK-methods						& fixed\\
	$\lambda$ & number of elements of pop.					& $1000$\\
	$\mu$ & number of crossing over for pop.				& $500$\\
	$I_{max}$ & maximum iterations of the cycle 				& $10^5$\\
	$acc-mean$ & acceptable value for fitness's mean of the pop. 		& $c_{q+1}$\\
	$acc-best$ & acceptable value for fitness of the best element		& $\nicefrac{c_{q+1}}{4}$\\	
	$output$	& output of {\sc Evo-Runge-Kutta-cycle}			& true/false\\	
	$oldSolutions$  & points in $\epsilon(X_q)$				& \\
	$newSolutions$  & points in $\epsilon(X_{q+1})$				& 
\end{tabular}
\caption{Parameters of used in evolutionary algorithm {\sc Evo-Runge-Kutta}}
\label{tab.para}
\end{center}
\end{table}

We use the package {\sc CMA-ES} (see \cite{cma}) for the evolutionary optimization.

\section{The Numerical Results for $EV_3^3$, $EV_4^4$ and $EV_4^5$}\label{results}
In this section we show the numerical results obtained with {\sc Evo-Runge-Kutta} for the explicit case in the levels $s=3,4,5$. Since \cite{Butcher:08}, the variety $EV_5^5$ is empty and so the optimization for $s=5$ runs over the same equations of $s=4$ but with more variables; hence $EV_4^5$ should be dimensionally bigger than $EV_4^4$. The Table \ref{tab:feasible-point} shows the feasible points founded during the running: it respects what expected. 

\begin{table*}[htbp]
\begin{center}
\begin{tabular}{c|c|c|c}
  Order/ Level &        3&         4&         5\\ \hline
	      2&        46&        108&      34\\
	      3&        \textbf{146}&       197&      140\\
	      4&        0&         \textbf{364}&      \textbf{932}\\
	      5&        0&         0&         0
 \end{tabular}
\end{center}
\caption{Feasible solutions for the varieties $EV_q^s$ with $s=2,3,4$ and $q=2,3,4,5$}
\label{tab:feasible-point}
\end{table*}

In addiction, we present in Table \ref{tab:runge-kutta-3-3}, and respectively in Table \ref{tab:runge-kutta-4-4}, some of the Runge Kutta elements in Pareto front of the final optimization in $EV_3^3$, and respectively $EV_4^4$: the condition $e_{0,0}=e(t_{0,0})$ is zero for both these computations (we enumerate the trees and their local error as in \cite{Hai}).

To have a proper view of the results, we list the Runge Kutta methods suggested in the Table \ref{tab:runge-kutta-4-4} with all the digits from the Table \ref{tab:rk.primo} to \ref{tab:rk.ultimo}.

\begin{table*}[htbp]
\begin{center}
\begin{scriptsize}
\begin{tabular}{cccc}
  0.6284799329301066 &&&\\
  0.0768995143272878 & 0.2946205527425946 &&\\
  0.9213018835278037 & -0.3307380161751250 & 0.4498926859290699 &\\ \hline
  0.1599508127125725 &  0.2455227275515280 &     0.4335530269375665 &    0.1609734327983330 \\
\end{tabular}
\end{scriptsize}
\caption{The Butcher tableau of first $4$ levels Runge-Kutta in Table \ref{tab:runge-kutta-4-4}}
\label{tab:rk.primo}
\end{center}
\end{table*}
 
\begin{table*}[htbp]
\begin{center}
\begin{scriptsize}
\begin{tabular}{cccc}
0.7363018963226102 &&&\\
-0.0824111295277597 &    0.3461092332051678 &&\\
0.8341073930886564 & -0.2376248889779167 & 0.4318380713817837 &\\ \hline
0.1571049761428195 &     0.2866708210526255 &     0.3980679473446672 &     0.1581562554598879\\
\end{tabular}
\end{scriptsize}
\caption{The Butcher tableau of second $4$ levels Runge-Kutta in Table \ref{tab:runge-kutta-4-4}}
\end{center}
\end{table*}

\begin{table*}[htbp]
\begin{center}
\begin{scriptsize}
\begin{tabular}{cccc}
0.7066207865932520 &&&\\
-0.1986791412551434 &  0.4920583546618451 &&\\
0.9521842711595797 & -0.3227487964624350 &  0.4140057189110243 &\\ \hline
0.1495805086972162  &    0.2852315575222697   &   0.4127591061897394  &   0.1524288275907748 \\
\end{tabular}
\end{scriptsize}
\caption{The Butcher tableau of third $4$ levels Runge-Kutta in Table \ref{tab:runge-kutta-4-4}}
\end{center}
\end{table*}

\begin{table*}[htbp]
\begin{center}
\begin{scriptsize}
\begin{tabular}{cccc}
0.7559397531895379 &&&\\
0.0103807700662207 &     0.2336794767442519 &&\\
0.7531440557235027 &     -0.1855193036061356 &    0.4515825081294965 &\\ \hline
0.1620639462265475  &    0.2833431842164397 &     0.3921840325526135 &    0.1624088370043993 \\
\end{tabular}
\end{scriptsize}
\caption{The Butcher tableau of $4$-th $4$ levels Runge-Kutta in Table \ref{tab:runge-kutta-4-4}}
\end{center}
\end{table*}

\begin{table*}[htbp]
\begin{center}
\begin{scriptsize}
\begin{tabular}{cccc}
0.7963012365770038 &&&\\
-0.3628863293699752 & 0.5665850927929587 &&\\
0.9145513895258300 & -0.2798774916728459 & 0.3940759984502829 &\\ \hline
0.1451855083814838  &    0.3164608254865673  &    0.3906989174541415  &    0.1476547486778075\\
\end{tabular}
\end{scriptsize}
\caption{The Butcher tableau of $5$-th $4$ levels Runge-Kutta in Table \ref{tab:runge-kutta-4-4}}
\end{center}
\end{table*}

\begin{table*}[htbp]
\begin{center}
\begin{scriptsize}
\begin{tabular}{cccc}
0.7573881838384795 &&&\\
-0.1741476827373300 & 0.4167594988988161 &&\\
0.8573072494974391 & -0.2469806098316732 & 0.4182913673468296 &\\ \hline
0.1534104115563865  &    0.2981759720267748  &    0.3935172041716111 &    0.1548964122452277\\
\end{tabular}
\end{scriptsize}
\caption{The Butcher tableau of $6$-th $4$ levels Runge-Kutta in Table \ref{tab:runge-kutta-4-4}}
\end{center}
\end{table*}

\begin{table*}[htbp]
\begin{center}
\begin{scriptsize}
\begin{tabular}{cccc}
0.6144690308782115 &&&\\
0.0769053801528305 & 0.3086255889689522 &&\\
0.9475038121348336 & -0.3525125955836914 & 0.4489168429216662 &\\ \hline
0.1594196652643455 &  0.2418674636726206 & 0.4381018921031239 & 0.1606109789599099\\
\end{tabular}
\end{scriptsize}
\caption{The Butcher tableau of $7$-th $4$ levels Runge-Kutta in Table \ref{tab:runge-kutta-4-4}}
\end{center}
\end{table*}

\begin{table*}[htbp]
\begin{center}
\begin{scriptsize}
\begin{tabular}{cccc}
0.6957301007596283 &&&\\
0.0048639630817450 &0.2994059361585764 &&\\	
0.8469017267939670 & -0.2590107629375511 & 0.4434995354305374 &\\ \hline
0.1594489916633499 & 0.2691847382771309 & 0.4110483067778322 & 0.1603179632816870\\
\end{tabular}
\end{scriptsize}
\caption{The Butcher tableau of $8$-th $4$ levels Runge-Kutta in Table \ref{tab:runge-kutta-4-4}}
\end{center}
\end{table*}

\begin{table*}[htbp]
\begin{center}
\begin{scriptsize}
\begin{tabular}{cccc}
0.6313830140634179 &&&\\
-0.0266776618862919 & 0.3952946478228191 &&\\
0.9776608341341720 & -0.3627250371329542 &0.4349091893706292 &\\ \hline
0.1552061988645526 & 0.2544888797173599 & 0.4329627509848846 & 0.1573421704332030\\
\end{tabular}
\end{scriptsize}
\caption{The Butcher tableau of $9$-th $4$ levels Runge-Kutta in Table \ref{tab:runge-kutta-4-4}}
\label{tab:rk.ultimo}
\end{center}
\end{table*}

\begin{sidewaystable}[htbp]
\begin{scriptsize}
\begin{center}
\begin{tabular}{cccccc|ccc}
$a_{2,1}$ &$a_{3,1}$  &$a_{3,2}$     &$w_1$        &$w_2$                &$w_3$        &$e_{1,0}$  &$e_{2,1}$  &$e_{2,2}$\\
0.7451612195870713 & -0.0009391259740175 &   0.5448300494088465 & 0.4105229605812859 & 0.3569548761324413 & 0.2325221632862728 & 4  & 0 & 11 \\
0.8359905251670485 & -0.0664387342696503 &    0.4584899731014609 & 0.4348280500453142 &    0.3606999172446620 & 0.2044720327100238 & 2  & 7  & 0\\
0.6755587558980997& 0.0455901340664806 &  0.5803039629935742 & 0.4251381574145029 &     0.3332944491731594 &     0.2415673934123377 & 6 & 4 & 4\\
0.6665035492138531& 0.0600660735594445 & 0.5500843178130679 & 0.4545870076958900 &     0.3085179558934952 & 0.2368950364106148 &4 &9 & 1\\
0.8270914980689880& -0.0584896696476523 & 0.5046425166264479 & 0.3993110749282756  &    0.3826248707541401  &    0.2180640543175842 &2&3&5\\
0.6023264657110801& 0.1075766214853408 &   0.5716638779026584 & 0.4840342046290478 &  0.2735569446838536 & 0.2424088506870987 &2 & 9& 3
\end{tabular}
\end{center}
\caption{The Butcher tableau of the new solutions for a $3$ levels Runge-Kutta of order $3$; the error coefficients are of order $10^{-16}$.}
\label{tab:runge-kutta-3-3}
\end{scriptsize}
\end{sidewaystable}

\begin{sidewaystable}[htbp]
\begin{center}
\begin{tabular}{cccccccccc|ccccccc}
$a_{2,1}$ &$a_{3,1}$        &$a_{3,2}$ &$a_{4,1}$ &$a_{4,2}$        &$a_{4,3}$ &$w_1$ &$w_2$ &$w_3$ &$w_4$        &$e_{1,0}$ &$e_{2,1}$ &$e_{2,2}$ &$e_{3,0}$ &$e_{3,1}$ &$e_{3,2}$ &$e_{3,3}$\\
0.628 & 0.076 & 0.294 & 0.921 & -0.330 & 0.449 & 0.159 &     0.245 &     0.433 &    0.160 & 6 & 1 & 4 & 11 & 8 & 4 & 27\\
0.736 &-0.082 &    0.346 & 0.834 & -0.237 & 0.431 &0.157 &     0.286 &     0.398 &     0.158 & 8 &5 & 4 & 13 & 3 & 9 & 14\\
0.706 & -0.198 &  0.492 & 0.952 & -0.322 &  0.414 &  0.149  &    0.285   &   0.412  &   0.152 & 1 & 4 &4 & 1 & 12 & 20 & 05 \\
0.755 &  0.010 &     0.233 & 0.753 &     -0.185 &    0.451 & 0.162  &    0.283 &     0.392  &    0.162 & 6 & 2 & 4 & 2 & 1 & 5 & 54\\
0.796  & -0.362 & 0.566  & 0.914  & -0.279 & 0.394 & 0.145  &    0.316  &    0.390  &    0.147  & 0 & 3 & 10 & 3 & 0 & 4 & 10\\
0.757  & -0.174  & 0.416  & 0.857  & -0.246  & 0.418  & 0.153   &    0.298   &    0.393  &    0.154 & 8 & 15 & 7 & 3 & 0 & 0 & 5\\
0.614 & 0.076 & 0.308 & 0.947 & -0.352 & 0.448 & 0.159 &  0.241 & 0.438 & 0.160  & 8 & 6 & 2 & 3 & 9 & 7 & 14\\
0.695 & 0.004  &0.299  & 0.846  & -0.259  & 0.443  & 0.159 & 0.269 & 0.411 & 0.160  & 9 & 1 & 9 & 5 & 1 & 31 & 4 \\
0.631  & -0.026  & 0.395  & 0.977  & -0.362 & t0.434  & 0.155  & 0.254  & 0.432  & 0.157  &4 & 15 & 9 & 2 & 5 & 19 & 7
\end{tabular}
\end{center}
\caption{The Butcher tableau of the new solutions for a $4$ levels Runge-Kutta of order $4$; the error coefficients are of order $10^{-16}$.}
\label{tab:runge-kutta-4-4}
\end{sidewaystable}

\section{Conclusion}
The designed and implemented evolutionary algorithm, {\sc Evo-Runge-Kutta}, optimizes implicit or explicit Runge-Kutta methods in order to find the maximal order of accuracy and to minimize theirs local error in the next order.
The results presented in this article suggest that further work in this research field will advance the designing of Runge-Kutta methods, in particular, and the use of the Evolutionary Algorithm for any kind of optimization over an algebraic variety.
To our knowledge this is the first time that algebraic geometry is used to state correctly that evolutionary algorithms have to be used to face a particular optimization problem. 
Again we think this is the first time that algebraic geometry and evolutionary algorithms are used to tackle a numerical analysis problem. Further refinement of our evolutionary optimization algorithm will surely improve the solution of these important numerical analysis problem. 

\vskip0.3cm

A copy of the software can be obtained by sending an email to the authors. 


\end{document}